\theoremstyle{plain}
\newtheorem{thm}{Theorem}[section]
\newtheorem{lem}{Lemma}[section]
\theoremstyle{definition}
\newtheorem{df}{Definition}[section]
\newtheorem{rem}{Remark}[section]
\newtheorem{ex}{Example}[section]
\newcommand{\FF}{\mathbb{F}}
\newcommand{\ZZ}{\mathbb{Z}}
\newcommand{\R}{\mathfrak{R}}
\DeclareMathOperator{\Vsupp}{Vsupp}
\DeclareMathOperator{\wt}{wt}
\begin{document}

\title [On code polynomials II] 
{Weight enumerators, intersection enumerators and Jacobi polynomials II}

\author[Chakraborty] {Himadri Shekhar Chakraborty*}
\address
	{
		Department of Mathematics, 
		Shahjalal University of Science and Techology\\ 
		Sylhet-3114, Bangladesh, 
	}
\email{himadri-mat@sust.edu}

\author[Miezaki] {Tsuyoshi Miezaki}
\address
	{
		Faculty of Science and Engineering\\
		Waseda University\\
		Tokyo 169-8555, Japan
	}
\email{miezaki@waseda.jp}

\author[Oura] {Manabu Oura}
\address
	{
		Faculty of Mathematics and Physics\\
		Kanazawa University\\
		Ishikawa 920--1192, Japan
	}
\email{oura@se.kanazawa-u.ac.jp}

\thanks{*Corresponding author}

\date{}
\maketitle

\begin{abstract}
In the present paper, 
we introduce the concepts of 
Jacobi polynomials and intersection enumerators of codes over 
$\FF_q$ and $\ZZ_{k}$ for arbitrary genus~$g$.
We also discuss the interrelation among them. 
Finally, we give the MacWilliams type identities for Jacobi polynomials. 
\end{abstract}

{\small
\noindent
{\bfseries Key Words:}
Code, weight enumerator, intersection enumerator, Jacobi polynomial.\\ \vspace{-0.15in}

\noindent
2010 {\it Mathematics Subject Classification}. Primary 94B05;
Secondary 11T71, 11F11.\\ \quad
}


\section{Introduction}
Weight enumerators make relationships between 
coding theory, invariant theory, and modular forms 
\cite{{BMS1972},{BMS1999},{BE},{Duke},{Gleason},{MMS1972},{NRS},{Runge}, {SCSS2019}}. 
A striking generalization of the weight enumerators was obtained by
Ozeki \cite{Ozeki}, 
who gave the concept of Jacobi polynomials for codes
in analogy with Jacobi forms~\cite{EZ} of lattices
and presented
a generalization of the MacWilliams identity.
In~\cite{OO}, the notion of the intersection polynomials was given 
for some computations of extremal codes.

In \cite{{HOO}}, the concepts of 
genus $g$ Jacobi polynomials and intersection enumerators of binary codes 
were introduced and the MacWilliams type identities of Jacobi polynomials were given. 
Moreover, the concept of genus $g$ (homogeneous) Jacobi polynomials of binary codes 
in~\cite{HOO} was generalized to  the notion of $g$-fold joint 
(homogeneous) Jacobi polynomials of codes over $\FF_{q}$ and $\ZZ_{k}$ in~\cite{CM2021} 
and the MacWilliams type identity for the generalized notion was presented.
In the present paper, 
we give the generalizations of the concepts discussed 
in~\cite{HOO}, such as genus~$g$ Jacobi polynomials and
genus~$g$ intersection polynomials of binary codes,
to some non-binary cases, like for codes over $\FF_{q}$ and $\ZZ_{k}$.  
Further, we generalize some of the results in~\cite{HOO},
particularly, the MacWilliams type identities for Jacobi polynomials. 

Throughout this paper, 
we assume that $\mathfrak{R}$ denotes 
either the finite field $\FF_{q}$ of order $q$, 
where $q$ is a prime power 
or the ring $\ZZ_{k}$ of integers modulo~$k$ 
for some positive integer $k \ge 2$.
Moreover, $\R^{\ast}$ denotes the set of 
non-zero elements of $\R$. 
We prefer to call the elements of $\R^{n}$ 
as \emph{vectors}.
For $u = (u_{1},\ldots,u_{n}) \in \R^{n}$, 
we denote 
$\wt_{\ell}(u):=\#\{i \mid u_{i} = \ell\}$.

The purpose of this paper is 
to introduce the following polynomials and 
discuss their properties.
Let $[g] := \{1,\ldots,g\}$
and
we denote by
$n_a(u_1, \ldots , u_g )$ 
the number of $i$ such that 
$a = (u_{1,i}, \ldots , u_{g,i})$ 
for $u_{1},\ldots,u_{g} \in \R^{n}$ and $a \in \R^g$. 

\begin{df}\label{Def:All}
Let $g$ be a positive integer and $C$ be an $\R$-linear code of length $n$. 
{Let 
$\binom{[g]}{p} := \{(K_{1},\ldots,K_{p}) \in \ZZ^{p} \mid 1 \leq K_{1} < \cdots < K_{p} \leq g\}$
for any positive integer $p$ such that $1 \leq p \leq g$. 
}
\begin{enumerate}
\item [(1)]
The $g$-th \emph{weight enumerator} of $C$ is
\[
	W^{(g)}
	_C (\{x_a\}_{a\in \R^g}) 
	=
	\sum_{u_1,\ldots,u_g\in C}
	\prod_{a \in \R^g}
	x_a^{n_a(u_1,...,u_g)}.
\]

\item [(2)]
The $g$-th \emph{intersection enumerator} of $C$ is
\begin{align*}
	&I_C^{(g)}
	(\{X_{K,L}\}
	_{1 \leq p \leq g,K\in\binom{[g]}{p},L\in (\R^\ast)^p})\\
	&=
	\sum_{u_1,\ldots,u_g\in C}
	\prod_{1 \leq p \leq g}
	\prod_{\scriptsize
	\begin{array}{c}
		K\in\binom{[g]}{p}
	\end{array}}
	\prod_{L\in {(\R^\ast)^p}}
	X_{K,L}^{n_{L}(u_{K_1},\ldots, u_{K_p})}.
\end{align*}

\item [(3)]
The $g$-th \emph{homogeneous Jacobi polynomial} of $C$ 
with reference vector $v \in \R^{n}$ is
\begin{align*}
	\mathfrak{Jac}^{(g)}_{C,v}(\{y_a\}_{a\in \R^{g+1}})
	=
	\sum_{u_1,\ldots,u_{g}\in C}
	\prod_{a \in \R^{g+1}}
	y_a^{n_a(u_1,...,u_g,v)}.
\end{align*}

\item [(4)]
Assume that $X_{(g+1),(j)}=1$ for all $j\in \R^{\ast}$. 
The $g$-th inhomogeneous Jacobi polynomial of $C$ 
with reference vector $v\in \R^n$ is
\begin{align*}
	&Jac^{(g)}_{C,v}
	(\{X_{K,L}\}
	_{1\leq p\leq g+1,K\in\binom{[g+1]}{p},L\in (\R^\ast)^p})\\
	& =
	\sum_{u_1,\ldots,u_g\in C}
	\prod_{1 \leq p \leq g+1}
	\prod_{\scriptsize
		\begin{array}{c}
			K\in\binom{[g+1]}{p}\\
			u_{g+1}=v
		\end{array}
	}
	\prod_{L\in {(\R^\ast)^p}}
	X_{K,L}^{n_{L}(u_{K_1},\ldots, u_{K_p})}.
\end{align*}
Equivalently, we can also write 
the $g$-th \emph{inhomogeneous Jacobi polynomial} of~$C$ 
with reference vector $v\in \R^n$ as follows:
\begin{align*}
	&Jac^{(g)}_{C,v}
	(\{X_{(k),(\ell)}\}_{k\in[g],j\in\R^{\ast}},
	\{X_{K,L}\}
	_{2 \leq p\leq g+1,K\in\binom{[g+1]}{p},L\in (\R^\ast)^p})\\
	& =
	\sum_{u_1,\ldots,u_g\in C}
	\left(
		\prod_{\scriptsize
			k \in [g]}
		\prod_{\ell \in \R^{\ast}}
		X_{(k),(\ell)}^{n_{\ell}(u_{k})}				
	\right)\\
	&\quad\quad\quad\quad
	\left(
		\prod_{2\leq p\leq g+1}
		\prod_{\scriptsize
			\begin{array}{c}
				K\in\binom{[g+1]}{p}\\
				u_{g+1}=v
			\end{array}}
		\prod_{L\in {(\R^\ast)^p}}
		X_{K,L}^{n_{L}(u_{K_1},\ldots, u_{K_p})}
	\right).
\end{align*}
\end{enumerate}
Note that when we say $u_{1}, \ldots, u_{g} \in C$ in the definitions 
of the four polynomials, 
we mean that the vectors $u_{i}$'s are not necessary to be distinct.
\end{df} 

\begin{ex}\label{Ex:All}
	Let $C_{2}$ be an $\FF_{3}$-linear code with length~$2$ having the codewords: $(0,0), (1,1), (2,2)$.
	The following examples will make the polynomials in Definition~\ref{Def:All} 
	that are complicated looking easier to understand. 
	We derive the polynomials for $g = 2$. 
	Let $v = (1,2) \in \FF_{3}^{2}$ be the reference vector for 
	the homogeneous and inhomogeneous Jacobi polynomials.
	\begin{align*}
		& W_{C_{2}}^{(2)}
		(\{x_a\}_{a\in \FF_{3}^{2}})
		=
		x_{(0,0)}^{2}+
		x_{(1,0)}^{2}+
		x_{(2,0)}^{2}+
		x_{(0,1)}^{2}+
		x_{(1,1)}^{2}+
		x_{(2,1)}^{2}+
		x_{(0,2)}^{2}\\
		& {\hspace{100pt}} +
		x_{(1,2)}^{2}+
		x_{(2,2)}^{2}\\
		& I_{C_{2}}^{(2)}
		(\{X_{K,L}\}
		_{1 \leq p \leq 2,K\in\binom{[2]}{p},L\in (\FF_{3}^\ast)^p})
		=
		1 +
		X_{(1),(1)}^{2} +
		X_{(1),(2)}^{2} +
		X_{(2),(1)}^{2} \\ 
		& \hspace{40pt} +
		X_{(1),(1)}^{2}
		X_{(2),(1)}^{2}
		X_{(1,2),(1,1)}^{2} +
		X_{(1),(2)}^{2}
		X_{(2),(1)}^{2}
		X_{(1,2),(2,1)}^{2} \\
		& \hspace{40pt} +
		X_{(1),(1)}^{2}
		X_{(2),(2)}^{2}
		X_{(1,2),(1,2)}^{2}+
		X_{(1),(2)}^{2}
		X_{(2),(2)}^{2}
		X_{(1,2),(2,2)}^{2}+
		X_{(2),(2)}^{2}	\\
		& \mathfrak{Jac}^{(2)}_{C_{2},v}(\{y_a\}_{a\in \FF_{3}^{3}})
		=
		y_{(0,0,1)}^{1} y_{(0,0,2)}^{1}	+
		y_{(1,0,1)}^{1} y_{(1,0,2)}^{1}	+
		y_{(2,0,1)}^{1} y_{(2,0,2)}^{1}	\\
		& \hspace{90pt} +
		y_{(0,1,1)}^{1} y_{(0,1,2)}^{1}	+
		y_{(1,1,1)}^{1} y_{(1,1,2)}^{1}	+
		y_{(2,1,1)}^{1} y_{(2,1,2)}^{1}	\\
		& \hspace{90pt} +
		y_{(0,2,1)}^{1} y_{(0,2,2)}^{1}	+
		y_{(1,2,1)}^{1} y_{(1,2,2)}^{1}	+
		y_{(2,2,1)}^{1} y_{(2,2,2)}^{1}	\displaybreak\\
		&Jac^{(2)}_{C_2,v}
		(\{X_{K,L}\}
		_{1\leq p\leq 3,K\in\binom{[3]}{p},L\in (\FF_{3}^\ast)^p})
		=
		1 +
		X_{(1),(1)}^{2} 
		X_{(1,3),(1,1)}^{1} X_{(1,3),(1,2)}^{1} \\
		& \hspace{30pt} +
		X_{(1),(2)}^{2} 
		X_{(1,3),(2,1)}^{1} X_{(1,3),(2,2)}^{1} +
		X_{(2),(1)}^{2} 
		X_{(2,3),(1,1)}^{1} X_{(2,3),(1,2)}^{1} \\
		& \hspace{30pt} +
		X_{(1),(1)}^{2} 
		X_{(2),(1)}^{2}
		X_{(1,2),(1,1)}^{2} 
		X_{(1,3),(1,1)}^{1} X_{(1,3),(1,2)}^{1}
		X_{(2,3),(1,1)}^{1} X_{(2,3),(1,2)}^{1}\\
		& \hspace{150pt}
		X_{(1,2,3),(1,1,1)}^{1} X_{(1,2,3),(1,1,2)}^{1}\\
		& \hspace{30pt} +
		X_{(1),(2)}^{2} 
		X_{(2),(1)}^{2}
		X_{(1,2),(2,1)}^{2} 
		X_{(1,3),(2,1)}^{1} X_{(1,3),(2,2)}^{1}
		X_{(2,3),(1,1)}^{1} X_{(2,3),(1,2)}^{1}\\
		& \hspace{150pt}
		X_{(1,2,3),(2,1,1)}^{1} X_{(1,2,3),(2,1,2)}^{1}\\
		& \hspace{30pt} +
		X_{(2),(2)}^{2} 
		X_{(2,3),(2,1)}^{1} X_{(2,3),(2,2)}^{1} +
		X_{(1),(1)}^{2} 
		X_{(2),(2)}^{2}
		X_{(1,2),(1,2)}^{2} 
		X_{(1,3),(1,1)}^{1} \\
		& \hspace{100pt}
		X_{(1,3),(1,2)}^{1}
		X_{(2,3),(2,1)}^{1} X_{(2,3),(2,2)}^{1}
		X_{(1,2,3),(1,2,1)}^{1} X_{(1,2,3),(1,2,2)}^{1}\\
		& \hspace{30pt} +
		X_{(1),(2)}^{2} 
		X_{(2),(2)}^{2}
		X_{(1,2),(2,2)}^{2} 
		X_{(1,3),(2,1)}^{1} X_{(1,3),(2,2)}^{1} 
		X_{(2,3),(2,1)}^{1} X_{(2,3),(2,2)}^{1}\\
		& \hspace{150pt}
		X_{(1,2,3),(2,2,1)}^{1} X_{(1,2,3),(2,2,2)}^{1}
	\end{align*}
\end{ex}

If there is no confusion, 
we prefer to write the notations of the above said 
polynomials in a simple form by omitting the notations
of the variables in the polynomials as 
$W_{C}^{(g)}$, $I_{C}^{(g)}$, $\mathfrak{Jac}^{(g)}_{C,v}$ and $Jac^{(g)}_{C,v}$.

\begin{rem}
	We have the following remarks:
\begin{enumerate}
	\item [(1)]
	It is easy to see that 
	\begin{align*}
		\begin{cases}
		\mathfrak{Jac}^{(g)}_{C,0}=W_C^{(g)}\\
		Jac^{(g)}_{C,0}=I_C^{(g)}. 
		\end{cases}
	\end{align*}
	\item [(2)]
	The number of variables in each polynomial is given by 
	\[
		\left\{
		\begin{array}{lll}
			W_C^{(g)} & : & {|\R|}^g,\\
			I_C^{(g)} & : & \sum_{p=1}^{g}\binom{g}{p}(|\R|-1)^p=|\R|^g-1,\\
			\mathfrak{Jac}^{(g)}_{C,v} & : & |\R|^{g+1},\\
			Jac^{(g)}_{C,v} & : 
			& \sum_{p=1}^{g+1}\binom{g+1}{p}(|\R|-1)^p-(|\R|-1)\\
			& & =|\R|(|\R|^g-1).
		\end{array}
		\right.
	\]
\end{enumerate}
\end{rem}

This paper is organized as follows. 
In Section~\ref{sec:pre}, 
we give definitions and some basic properties of 
codes used in this paper.
In Section \ref{sec:rel}, 
we give relations between the four polynomials 
(Theorem~\ref{Thm:WeightInter}, Theorem~\ref{Thm:HomoInhomo}, Theorem~\ref{Thm:InterJacobi}). 
In Section \ref{sec:mac}, we give the
MacWilliams type identities for a $g$-th homogeneous Jacobi polynomial (Theorem~\ref{Thm:MacHomo}) and a $g$-th inhomogeneous Jacobi polynomial (Theeorem~\ref{Thm:MacInhomo}). 


\section{Preliminaries}\label{sec:pre}

We refer the readers to~\cite{BDHO1999, HP, MS1977} 
for the background of the coding theory. 
Let $\FF_{q}$ be the finite field of order~$q$,
where~$q = p^{f}$ for some prime number~$p$. 
Then $\FF_{q}^{n}$ denotes the $n$-dimensional vector 
space over $\FF_{q}$ equipped with the following inner product
\[
	u \cdot v
	:=
	\sum_{i = 1}^{n}
	u_{i} v_{i},
\]
where
$u = (u_{1},\ldots,u_{n})$,
$v = (v_{1},\ldots,v_{n}) \in \FF_{q}^{n}$.
If $q$ is an even power~$f$ of an arbitrary prime~$p$,
then it is convenient to consider another inner product
given by
\[
	u \cdot v
	:=
	\sum_{i = 1}^{n}
	u_{i}{v_{i}}^{\sqrt{q}}
\]
On the other hand, 
let $\ZZ_{k}$ be the finite ring of integers modulo~$k$
for some positive integer~$k \geq 2$.
Then $\ZZ_{k}^{n}$ denotes the $\ZZ_{k}$-module of 
all~$n$-tuples over~$\ZZ_{k}$.
Let 
$u = (u_{1},\ldots,u_{n})$,
$v = (v_{1},\ldots,v_{n})$
be two elements of~$\ZZ_{k}^{n}$.
Then the inner product of $u$ and $v$ on $\ZZ_{k}^{n}$ 
is defined as
\[
	u \cdot v
	:=
	\sum_{i = 1}^{n}
	u_{i}v_{i}.
\]

An $\R$-\emph{linear code} $C$ of length~$n$ is either a
vector subspace of $\R^{n}$ when $\R$ represents~$\FF_{q}$
or a submodule of $\R^{n}$ if $\R$ denotes~$\ZZ_{k}$.
The \emph{dual} of an $\R$-linear code~$C$
is denoted by $C^{\perp}$ and defined as
\[
	C^{\perp}
	:=
	\{
	v \in \R^{n}
	\mid
	u \cdot v = 0
	\mbox{ for all }
	u \in C
	\}.
\]

{Let $a = (a_{1},\ldots,a_{g}) \in \R^{g}$ with nonzero weight $p$.
Then $\Vsupp(a) := (i \mid a_{i} \neq 0)$,
where $i$'s are in ascending order, is an element of $\binom{[g]}{p}$.
Let $K = (K_{1},\ldots,K_{p}) \in \binom{[g]}{p}$.
Then by $k \in K$ we mean that $k = K_{j}$ for some $1\leq j \leq p$.
Moreover, by $K^{'} \subset K$,
we denote either a $r$-tuple
$K'= (K_{m_{1}},\ldots,K_{m_{r}})$ 
for $1 \leq r \leq p$ 
such that 
$1 \leq m_{1} \leq \cdots \leq m_{r} \leq p$ 
or an empty tuple $K^{'}$
which we prefer to write as~$\emptyset$.
}

Now we have the following useful lemma for this paper.

\begin{lem}\label{lem:1}
Let $u_1, \ldots, u_g$ be elements of $\R^n$. 
Then the following hold. 
\begin{enumerate}
\item [{\rm (1)}]
$\begin{aligned}[t]
	\prod_{\substack{a \in \R^g,\\ {a \neq 0}}}
	\prod_{
		\scriptsize
		\begin{array}c
		K\subset \Vsupp(a),\\
		{K \neq \emptyset}
		\end{array}}
	& X_{K,a_K}^{n_a(u_1,...,u_g)}\\
	& =
	\prod_{1 \leq p\leq g}
	\prod_{\scriptsize
	\begin{array}{c}
	K\in\binom{[g]}{p}
	\end{array}
	}
	\prod_{L\in {(\R^\ast)^p}}
	X_{K,L}^{n_{L}(u_{K_1},\ldots,
	 u_{K_p})},
\end{aligned}\\$
{where $a_K$ is the length $|K|$ vector indexed by $K$ such that 
$a_K=(a_{i_1},\ldots,a_{i_{|K|}})$ with $i_1<\cdots<i_{|K|}$}.

\item[{\rm (2)}]

For $0 \neq a \in \R^{g}$,
$\begin{aligned}[t]
	x_a
	=
	{x_0} 
	\prod_{
		\scriptsize
		\begin{array}c
		K \subset \Vsupp(a),\\
		{K \neq \emptyset}
		\end{array}}
	\prod_{K'\subset K}
	x_{a_{K'}}^{(-1)^{|K|-|K'|}},
\end{aligned}$
{where 
$a_{K'}$
is the length $g$ vector such that 
${a_{K',i}}=a_i$ for $i\in K'$, 
${a_{K',i}}=0$ for $i\not\in K'$, 
and $a_{\emptyset}=0$}.
\end{enumerate}
\end{lem}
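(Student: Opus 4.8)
The plan is to prove both identities by carefully tracking which monomial each fixed coordinate position contributes, since both sides are products over all positions $i \in \{1,\ldots,n\}$ hidden inside the exponents $n_a$ and $n_L$. For part (1), I would first observe that on the right-hand side the exponent $n_L(u_{K_1},\ldots,u_{K_p})$ counts positions $i$ at which the subvector $(u_{K_1,i},\ldots,u_{K_p,i})$ equals $L \in (\R^\ast)^p$, i.e. positions whose restriction to the coordinates indexed by $K$ is exactly $L$ and in particular is everywhere nonzero on $K$. On the left-hand side the exponent $n_a(u_1,\ldots,u_g)$ counts positions $i$ where the full column $(u_{1,i},\ldots,u_{g,i})$ equals $a$. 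So I would fix a single position $i$ and compare the two sides column by column: the plan is to show that the total contribution of position $i$ is identical on both sides.

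For a fixed position $i$, let $a = (u_{1,i},\ldots,u_{g,i})$ be the column vector in $\R^g$, and let $S = \Vsupp(a)$ be its set of nonzero coordinates, so $p = |S| = \wt(a)$. On the left-hand side, position $i$ contributes exactly one factor $X_{K,a_K}$ for each nonempty $K \subset \Vsupp(a)$ (the condition $a \neq 0$ just excludes the all-zero column, which contributes nothing since it has no nonempty $K \subset \Vsupp(a) = \emptyset$). On the right-hand side, position $i$ contributes a factor $X_{K,L}$ precisely when $K \in \binom{[g]}{p'}$ and $L = (u_{K_1,i},\ldots,u_{K_p',i})$ has all entries nonzero, which forces $K \subset S$ and $L = a_K$. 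Thus on both sides position $i$ contributes exactly one $X_{K,a_K}$ for each nonempty $K \subseteq S$, and summing (i.e. multiplying) these position-by-position contributions over all $i$ reassembles both products with matching exponents. This bijection between the two indexing schemes is the heart of the argument, and the main thing to get right is the bookkeeping that $K$ ranging over nonempty subsets of $\Vsupp(a)$ on the left matches $K$ ranging over all of $\binom{[g]}{p}$ (for all $p$) subject to $a_K$ being nonzero-everywhere on the right.

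For part (2), the plan is inclusion-exclusion. I would introduce, for a fixed $a \in \R^g$ with $a \neq 0$ and each $K \subset \Vsupp(a)$, the vector $a_K \in \R^g$ that agrees with $a$ on $K$ and is zero elsewhere; note $a_{\Vsupp(a)} = a$ and $a_\emptyset = 0$. The claim $x_a = x_0 \prod_{\emptyset \neq K \subseteq \Vsupp(a)} \prod_{K' \subseteq K} x_{a_{K'}}^{(-1)^{|K|-|K'|}}$ should be proved by taking formal logarithms to turn it into the additive Möbius-type identity $\la a \ra = \sum_{\emptyset \neq K \subseteq \Vsupp(a)} \sum_{K' \subseteq K} (-1)^{|K|-|K'|} \la a_{K'} \ra$ in the free abelian group on symbols $\la b \ra$, where I write $x_b = \exp\la b\ra$ informally. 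Reindexing the double sum by fixing $K'$ first, the coefficient of each $\la a_{K'}\ra$ is $\sum_{K : K' \subseteq K \subseteq S} (-1)^{|K|-|K'|}$ with $S = \Vsupp(a)$, which by the binomial identity $\sum_{j} \binom{|S|-|K'|}{j}(-1)^j$ vanishes unless $K' = S$, in which case it equals $1$; the $x_0 = \exp\la 0\ra$ factor cancels the single $K' = \emptyset$ term. The main obstacle, and the place I would be most careful, is verifying the edge cases of this Möbius cancellation, especially confirming that the lone surviving term is $\la a_S \ra = \la a \ra$ and that every intermediate $a_{K'}$ with $\emptyset \neq K' \subsetneq S$ cancels; this is a clean alternating-sum-over-a-Boolean-interval computation, but it is where an off-by-one in the index ranges would break the identity.
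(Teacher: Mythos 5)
Your proposal is correct and follows essentially the same route as the paper's proof: part (1) by matching, position by position, the single factor $X_{K,a_K}$ contributed for each nonempty $K\subset\Vsupp(u_{1,i},\ldots,u_{g,i})$ on both sides, and part (2) by computing the exponent of each $x_{a_{K'}}$ as the alternating binomial sum over the interval $K'\subseteq K\subseteq\Vsupp(a)$, which vanishes except when $K'=\Vsupp(a)$, with the $x_0$ prefactor absorbing the $K'=\emptyset$ contribution. Your ``formal logarithm'' phrasing is just the paper's exponent bookkeeping $E(x_{a_{K'}})$ in different clothing, so there is nothing substantive to distinguish the two arguments.
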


\begin{proof}
(1)
The left-hand side counts for $1\leq i\leq n$, the 
number of pairs 
$\{(K,a_K) \mid K\subset \Vsupp(u_{1,i},\ldots,u_{g,i})\}$.
The right-hand side counts for $1\leq i\leq g$, the 
number of pairs 
$\{(K,L) \mid K\subset \Vsupp(u_{i,1},\ldots,u_{i,n})\}$.

(2)
Let the exponent of $x_{a_{K'}}$ in the right-hand  
be $E(x_{a_{K'}})$. 
It is immediate that $x_{a_{K'}}$ appears only once for $K'= \Vsupp(a)$.
Let the length of $\Vsupp(a)$ and $K'$ be $m$ and $\ell$, respectively.
Then for $K' \neq \Vsupp(a)$ with $\ell \neq 0$, we have
\begin{align*}
	E(x_{a_{K'}}) 
	& = 
	\sum_{k = \ell}^{m}
	\#
	\{
		K \in  \binom{[g]}{k}
		\mid 
		K' \subset K \subset \Vsupp(a)\}\\
	& =
	\sum_{k = \ell}^{m}
	(-1)^{k-\ell}
	\binom{m-\ell}{k - \ell}\\
	& =
	\sum_{t = 0}^{m-\ell}
	(-1)^{t}
	\binom{m-\ell}{t}\\
	& =
	(1 + (-1))^{m-\ell}\\
	& = 
	0.
\end{align*} 
Now for $K' \neq \Vsupp(a)$ with $\ell = 0$, we have
\begin{align*}
	E(x_{a_{K'}}) 
	& =
	1 + 
	\sum_{k = 1}^{m}
	\#
	\{
		K \in  \binom{[g]}{k}
		\mid 
		K \subset \Vsupp(a)
	\}\\
	& =
	1 +
	\sum_{k = 1}^{m}
	(-1)^{k}
	\binom{m}{k}\\
	& =
	(1 + (-1))^{m}\\
	& = 
	0.
\end{align*}
Thus the above discussed sums conclude that the right-hand side consists of $x_{a}$ only.
This completes the proof.
\end{proof}

\section{Relations between four polynomials}\label{sec:rel}

\subsection{Weight enumerators and Intersection enumerators}

In this section, 
we give a relation between weight enumerators and intersection enumerators. 

\begin{thm}\label{Thm:WeightInter}
Let $C$ be an $\R$-linear code of length $n$. 
Then the following hold.
\begin{enumerate}
\item [{\rm (1)}]
$\begin{aligned}[t]
	W_C^{(g)}
	\left(
		{x_0\leftarrow 1,
		x_{a}\leftarrow 
		\prod_{
		\scriptsize
		\begin{array}c
		K\subset \Vsupp(a),\\
		K \neq \emptyset
		\end{array}
		}
		X_{K,a_K}
		\mbox{ for } 
		a \neq 0}
	\right)
	=
	I_C^{(g)}, 
\end{aligned}\\$
where 
$a \in \R^{g}$, and $a_K$ is the length $|K|$ vector indexed by $K$ such that 
$a_K=(a_{i_1},\ldots,a_{i_{|K|}})$ with $i_1<\cdots<i_{|K|}$.

\item [{\rm (2)}]
$\begin{aligned}[t]
	x_0^n
	I_C^{(g)}
	\left(
		X_{K,L}\leftarrow 
		\prod_{K'\subset K}
		x_{L_{K'}}^{(-1)^{|K|-|K'|}}
	\right)
	=
	W_C^{(g)}, 
\end{aligned}\\$
where 
$K = (K_{1},\ldots,K_{p})\in \binom{[g]}{p}$
for 
$1 \leq p \leq g$, and
$K' = (K_{m_{1}},\ldots,K_{m_{r}})$ for
$1 \leq r \leq p$ and
$1 \leq m_{1} \leq \cdots \leq m_{r} \leq p$,
and
$L_{K'}$
is the length {$g$} vector such that 
${L_{K',i}}=L_{j}$ for $i = K_{m_{j}} \in K'$ where $1\leq j \leq r$, 
${L_{K',i}}=0$ for $i\not\in K'$,
and $L_{\emptyset}=0$. 
\end{enumerate}
\end{thm}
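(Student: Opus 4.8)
The goal is to prove a pair of substitution identities that convert the $g$-th weight enumerator into the $g$-th intersection enumerator and back. I have already established Lemma~\ref{lem:1}, which is exactly the algebraic engine needed here, so the plan is to reduce each statement to that lemma by tracking how the substitutions act on a single monomial $\prod_{a\in\R^g} x_a^{n_a(u_1,\dots,u_g)}$ coming from one fixed tuple $(u_1,\dots,u_g)$ of codewords.

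For part~(1), the plan is to perform the substitution $x_0\leftarrow 1$ and $x_a\leftarrow \prod_{\emptyset\neq K\subset\Vsupp(a)} X_{K,a_K}$ directly inside the defining sum for $W_C^{(g)}$. Since the substitution is applied termwise and the sum over codeword tuples is untouched, it suffices to check the claim on the product $\prod_{a\in\R^g} x_a^{n_a}$ for each fixed tuple. The $x_0$ factor disappears because $x_0\leftarrow 1$, so only the $a\neq 0$ factors survive, and these are precisely the left-hand side of Lemma~\ref{lem:1}(1). Applying that lemma rewrites the product as $\prod_{1\leq p\leq g}\prod_{K\in\binom{[g]}{p}}\prod_{L\in(\R^\ast)^p} X_{K,L}^{n_L(u_{K_1},\dots,u_{K_p})}$, which is exactly the monomial appearing in the definition of $I_C^{(g)}$. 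Summing over all tuples then yields $I_C^{(g)}$, completing part~(1).

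For part~(2), the strategy is the reverse: I would start from $I_C^{(g)}$, apply the substitution $X_{K,L}\leftarrow \prod_{K'\subset K} x_{L_{K'}}^{(-1)^{|K|-|K'|}}$, and multiply by the prefactor $x_0^n$. Again it is enough to work on a single codeword-tuple monomial. First I would use Lemma~\ref{lem:1}(1) in reverse to re-express the intersection monomial as $\prod_{a\neq 0}\prod_{\emptyset\neq K\subset\Vsupp(a)} X_{K,a_K}^{n_a}$, then substitute each $X_{K,a_K}$ using Lemma~\ref{lem:1}(2), which says $x_a = x_0\prod_{\emptyset\neq K\subset\Vsupp(a)}\prod_{K'\subset K} x_{a_{K'}}^{(-1)^{|K|-|K'|}}$. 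The telescoping/binomial cancellation already proved in Lemma~\ref{lem:1}(2) guarantees that all the spurious factors $x_{a_{K'}}$ for $K'\neq\Vsupp(a)$ collapse, leaving only $x_a/x_0$ raised to $n_a$. Collecting these over all $a\neq 0$ recovers $\prod_{a\neq 0} x_a^{n_a} \cdot x_0^{-\sum_{a\neq 0} n_a}$.

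The one genuinely delicate point, and the step I expect to cost the most care, is the bookkeeping of the $x_0$ powers. Because $\sum_{a\in\R^g} n_a(u_1,\dots,u_g)=n$ for every tuple, we have $\sum_{a\neq 0} n_a = n - n_0$, so the accumulated negative power of $x_0$ is $x_0^{-(n-n_0)}$; multiplying by the global prefactor $x_0^n$ restores exactly $x_0^{n_0}=x_0^{n_0(u_1,\dots,u_g)}$, which is the missing $a=0$ factor. Thus the full monomial becomes $\prod_{a\in\R^g} x_a^{n_a}$, and summing over tuples gives $W_C^{(g)}$. I would make sure to state the identity $\sum_a n_a = n$ explicitly and to note that the index $L_{K'}$ in the statement is precisely the vector $a_{K'}$ of Lemma~\ref{lem:1}(2) under the correspondence $a=L$ (extended by zeros off $\Vsupp$), so that the two lemma parts fit together without any residual variables.
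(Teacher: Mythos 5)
Your proposal is correct and follows essentially the same route as the paper: part (1) is a termwise substitution reduced to Lemma~\ref{lem:1}(1), and part (2) combines the reverse of Lemma~\ref{lem:1}(1) with the cancellation identity of Lemma~\ref{lem:1}(2), the only cosmetic difference being that you track the $x_0$ powers via $\sum_a n_a = n$ while the paper distributes the prefactor as $x_0^n = x_0^{n_0}\prod_{a\neq 0} x_0^{n_a}$, which is the same bookkeeping. No gaps.
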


\begin{proof}
(1)
By Lemma \ref{lem:1} (1), 
\begin{align*}
	&W_C^{(g)}
	\left(
		{x_{0} \leftarrow 1,
		x_{a}\leftarrow 
		\prod_{
			\scriptsize
			\begin{array}c
			K \subset \Vsupp(a)\\
			K \neq \emptyset
			\end{array}}
		X_{K,a_K}
		\mbox{ for }
		a \neq 0}
	\right)\\
	&=
	\sum_{u_1,\ldots,u_g\in C}
	\left(
		\prod_{\substack{a \in \R^g,\\ {a \neq 0}}}
		\prod_{
			\scriptsize
			\begin{array}c
			K\subset \Vsupp(a),\\
			{K \neq \emptyset}
			\end{array}}
		X_{K,a_K}
		^{n_a(u_1,...,u_g)}
	\right)\\
	&=
	\sum_{u_1,\ldots,u_g\in C}
	\prod_{1 \leq p\leq g}
	\prod_{\scriptsize
		\begin{array}{c}
		K\in\binom{[g]}{p}
		\end{array}}
	\prod_{L\in {(\R^{\ast})^p}}
	X_{K,L}^{n_{L}(u_{K_1},\ldots, u_{K_p})}\\
	&=
	I_C^{(g)}. 
\end{align*}
(2)
Let $u_{1},\ldots,u_{g} \in \R^{n}$ and $a \in \R^{g}$. 
We observe for $a \neq 0$ that
$x_{0}^{n} = x_{0}^{n_{0}(u_{1},\ldots,u_{g})} \prod_{a\in\R^{g}} x_{0}^{n_{a}(u_{1},\ldots,u_{g})}$.
Now by Lemma~\ref{lem:1}, 
\begin{align*}
	&x_0^n
	I_C^{(g)}
	\left(
		X_{K,L}
		\leftarrow 
		\prod_{K'\subset K}
		x_{L_{K'}}^{(-1)^{|K|-|K'|}}
	\right)\\
	&=
	x_{0}^{n}
	\sum_{u_1,\ldots,u_g\in C}
	\prod_{1 \leq p \leq g}
	\prod_{\scriptsize
		\begin{array}{c}
			K\in\binom{[g]}{p}
	\end{array}}
	\prod_{L\in {(\R^\ast)^p}}
	\left(
	\prod_{K'\subset K}
	x_{L_{K'}}^{(-1)^{|K|-|K'|}}
	\right)^{n_{L}(u_{K_1},\ldots, u_{K_p})}\\
	&=
	x_{0}^{n}
	\sum_{u_1,\ldots,u_g\in C}
	\prod_{\substack{a \in \R^g,\\ {a \neq 0}}}
	\left(
		\prod_{
		\scriptsize
		\begin{array}c
		K\subset \Vsupp(a),\\
		{K \neq \emptyset}
		\end{array}}
		\prod_{K'\subset K}
		x_{a_{K'}}^{(-1)^{|K|-|K'|}}
	\right)^{n_a(u_1,...,u_g)}\\
	&=
	\sum_{u_1,\ldots,u_g\in C}
	x_0^{n_{0}(u_{1},\ldots,u_{g})}
	\prod_{\substack{a \in \R^g,\\ {a \neq 0}}}
	\left(
		{x_0}
		\prod_{
			\scriptsize
			\begin{array}c
			K\subset \Vsupp(a),\\
			{K \neq \emptyset}
		\end{array}}
		\prod_{K'\subset K}
		x_{a_{K'}}^{(-1)^{|K|-|K'|}}
	\right)^{n_a(u_1,...,u_g)}\\
	& =
	\sum_{u_1,\ldots,u_g\in C}
	x_0^{n_{0}(u_{1},\ldots,u_{g})}
	\prod_{\substack{a \in \R^g,\\{a \neq 0}}}
	x_{a}^{n_{a}(u_{1},\ldots,u_{g})}\\
	& =
	W_C^{(g)}.  
\end{align*}
Hence we complete the proof.
\end{proof}

\begin{ex}\label{Ex:WeightInter}
	Applying Theorem~\ref{Thm:WeightInter} in Example~\ref{Ex:All}, 
	we have 
	\begin{multline*}
		W_{C_{2}}^{(2)}
		(x_{(0,0)} \leftarrow 1, 
		x_{(0,1)} \leftarrow X_{(2),(1)}, 
		x_{(0,2)} \leftarrow X_{(2),(2)},
		x_{(1,0)} \leftarrow X_{(1),(1)},\\
		x_{(1,1)} \leftarrow X_{(1),(1)}X_{(2),(1)}X_{(1,2),(1,1)},
		x_{(1,2)} \leftarrow X_{(1),(1)}X_{(2),(2)}X_{(1,2),(1,2)},\\
		x_{(2,0)} \leftarrow X_{(1),(2)},
		x_{(2,1)} \leftarrow X_{(1),(2)}X_{(2),(1)}X_{(1,2),(2,1)},\\
		x_{(2,2)} \leftarrow X_{(1),(2)}X_{(2),(2)}X_{(1,2),(2,2)})	\\
		=
		I_{C_{2}}^{(2)}
		(X_{(1),(1)},X_{(1),(2)},
		X_{(2),(1)},X_{(2),(2)},\\
		X_{(1,2),(1,1)},X_{(1,2),(1,2)},X_{(1,2),(2,1)},X_{(1,2),(2,2)}).	
	\end{multline*}

	\begin{multline*}
		x_{(0,0)}^{2}
		I_{C_{2}}^{(2)}
		(X_{(1),(1)} \leftarrow \frac{x_{(1,0)}}{x_{(0,0)}},
		X_{(1),(2)} \leftarrow \frac{x_{(2,0)}}{x_{(0,0)}},
		X_{(2),(1)} \leftarrow \frac{x_{(0,1)}}{x_{(0,0)}},\\
		X_{(2),(2)} \leftarrow \frac{x_{(0,2)}}{x_{(0,0)}}, 
		X_{(1,2),(1,1)} \leftarrow \frac{x_{(0,0)}x_{(1,1)}}{x_{(1,0)}x_{(0,1)}},
		X_{(1,2),(1,2)} \leftarrow \frac{x_{(0,0)}x_{(1,2)}}{x_{(1,0)}x_{(0,2)}},\\
		X_{(1,2),(2,1)} \leftarrow \frac{x_{(0,0)}x_{(2,1)}}{x_{(2,0)}x_{(0,1)}},
		X_{(1,2),(2,2)} \leftarrow \frac{x_{(0,0)}x_{(2,2)}}{x_{(2,0)}x_{(0,2)}})\\
		=
		W_{C_{2}}^{(2)}(x_{(0,0)},x_{(0,1)},x_{(0,2)},x_{(1,0)},x_{(1,1)},x_{(1,2)},
		x_{(2,0)},x_{(2,1)},x_{(2,2)}).
	\end{multline*}
\end{ex}


\subsection{Homogeneous and inhomogeneous Jacobi polynomials}
In this section, 
we give a relation between homogeneous and inhomogeneous Jacobi polynomials. 

\begin{thm}\label{Thm:HomoInhomo}
Let $C$ be an $\R$-linear code of length $n$. Then
\begin{align*}
	\mathfrak{Jac}^{(g)}_{C,v}&(\{y_a\}_{a\in \R^{g+1}})\\
	&=
	(y_0)^n
	\prod_{\ell\in \R^{\ast}}
	\left(\frac{y_{(0,\ldots,0,\ell)}}{y_0}\right)^{\wt_\ell(v)}\\
	&\quad\quad
	{Jac}^{(g)}_{C,v}
	(X_{(k),(\ell)} \leftarrow
	\left\{\dfrac{y_{L_{(k)}}}{y_{0}}\right\},
	X_{K,L}\leftarrow 
	\prod_
	{
	K'\subset K
	}
	y_{L_{K'}}^{(-1)^{|K|-|K'|}}
	), 
\end{align*}
where 
$L_{(k)}$ denotes a vector with length~$g+1$ such that 
$L_{(k),i} =\ell$ if $i = k$, $L_{(k),i} = 0$ if $i \neq k$, and
$K = (K_{1},\ldots,K_{p}) \in \binom{[g+1]}{p}$ 
for $2 \leq p \leq g+1$, and
$K' = (K_{m_{1}},\ldots,K_{m_{r}})$ for
$1 \leq r \leq p$ and
$1 \leq m_{1} \leq \cdots \leq m_{r} \leq p$,
and
$L_{K'}$ is the length {$g+1$} vector such that 
${L_{K',i}}=L_{j}$ for $i = K_{m_{j}} \in K'$, 
where $1\leq j \leq r$, 
${L_{K',i}}=0$ for $i \notin K'$,
 and $L_{\emptyset}=0$. 
\end{thm}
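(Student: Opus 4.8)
The plan is to prove the identity by relating the homogeneous Jacobi polynomial $\mathfrak{Jac}^{(g)}_{C,v}$, which is built from monomials in the variables $\{y_a\}_{a \in \R^{g+1}}$, to the inhomogeneous Jacobi polynomial $Jac^{(g)}_{C,v}$ by substituting the specified rational expressions for each variable $X_{(k),(\ell)}$ and $X_{K,L}$. The key structural observation is that in the homogeneous Jacobi polynomial the reference vector $v = u_{g+1}$ is treated on exactly the same footing as the codewords $u_1, \ldots, u_g$, whereas in the inhomogeneous version the $(g+1)$-st coordinate is singled out, with $u_{g+1} = v$ forced and the normalization $X_{(g+1),(j)} = 1$ imposed. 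So the heart of the argument is to convert the product over all $a \in \R^{g+1}$ into the two-tiered product structure of $Jac^{(g)}_{C,v}$ (singletons $X_{(k),(\ell)}$ for $k \in [g]$ together with the higher-order terms $X_{K,L}$ for $2 \le p \le g+1$).

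First I would fix codewords $u_1, \ldots, u_g \in C$ and set $u_{g+1} = v$, and work termwise, showing that the monomial $\prod_{a \in \R^{g+1}} y_a^{n_a(u_1,\ldots,u_g,v)}$ produced by $\mathfrak{Jac}^{(g)}_{C,v}$ equals the corresponding term of the right-hand side. The natural tool is Lemma~\ref{lem:1}, applied now to the $g+1$ vectors $u_1, \ldots, u_g, v$ in place of $u_1, \ldots, u_g$. Part~(2) of that lemma lets me write each $y_a$ for $a \ne 0$ as $y_0$ times a product of $y_{a_{K'}}^{(-1)^{|K|-|K'|}}$ over $K' \subset K \subset \Vsupp(a)$, which is precisely the substitution rule appearing in the theorem for the $X_{K,L}$. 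This is the step that introduces the prefactor $(y_0)^n$: exactly as in the proof of Theorem~\ref{Thm:WeightInter}(2), the factorization $y_0^n = y_0^{n_0(u_1,\ldots,u_g,v)} \prod_{a \ne 0} y_0^{n_a(u_1,\ldots,u_g,v)}$ absorbs the leading $y_0$ coming from each nonzero $a$.

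The step that requires genuine care — and which I expect to be the main obstacle — is correctly isolating the singleton contributions indexed by $K = (k)$ for $k \in [g]$ and the special role of the index $g+1$. In $Jac^{(g)}_{C,v}$ the weight-one terms in the first $g$ coordinates are recorded directly by the variables $X_{(k),(\ell)}$, which is why they must be substituted not by the full Lemma~\ref{lem:1}(2) expression but by the normalized ratio $y_{L_{(k)}}/y_0$; meanwhile the coordinate $g+1$ contributes no free singleton variable (because $X_{(g+1),(j)} = 1$), and its singleton information is instead carried by the explicit factor $\prod_{\ell \in \R^{\ast}} (y_{(0,\ldots,0,\ell)}/y_0)^{\wt_\ell(v)}$. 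I would verify that this extra product accounts exactly for those $a \in \R^{g+1}$ whose support is the singleton $\{g+1\}$, i.e.\ $a = (0,\ldots,0,\ell)$, and that $n_a(u_1,\ldots,u_g,v)$ for such $a$ equals $\wt_\ell(v)$. Matching this bookkeeping against the restriction $2 \le p \le g+1$ in the higher-order product of $Jac^{(g)}_{C,v}$ is the delicate part: I must confirm that no index pair $(K,L)$ is double-counted or omitted as I pass between the homogeneous product over all $a$ and the split into singleton-plus-higher-order form.

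Once the termwise identity is established, summing over all $u_1, \ldots, u_g \in C$ completes the proof, since both sides are sums of the matched monomials over the same index set. Concretely, after applying Lemma~\ref{lem:1}(2) to rewrite each $y_a$ and collecting the $y_0$ powers into $(y_0)^n$, I would reorganize the resulting product so that the weight-one factors in coordinates $1,\ldots,g$ become $X_{(k),(\ell)} = y_{L_{(k)}}/y_0$, the weight-one factors in coordinate $g+1$ become the explicit $\wt_\ell(v)$ product, and the remaining factors with $|K| \ge 2$ become $X_{K,L} = \prod_{K' \subset K} y_{L_{K'}}^{(-1)^{|K|-|K'|}}$, at which point the right-hand side is recognized as the claimed substituted form of $Jac^{(g)}_{C,v}$ multiplied by the prefactor. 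I do not anticipate any obstruction beyond the careful index management described above, which is essentially combinatorial bookkeeping parallel to the proof of Theorem~\ref{Thm:WeightInter}.
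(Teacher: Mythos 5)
Your proposal is correct and follows essentially the same route as the paper: both rest on Lemma~\ref{lem:1}(2) applied to the $g+1$ vectors $u_1,\ldots,u_g,v$, absorb the leading $y_0$'s into the prefactor $(y_0)^n$, and account for the suppressed $(g+1)$-singleton variables (where $X_{(g+1),(j)}=1$) via the explicit factor $\prod_{\ell\in\R^{\ast}}\left(y_{(0,\ldots,0,\ell)}/y_0\right)^{\wt_\ell(v)}$; the paper merely runs the same computation from the right-hand side back to $\mathfrak{Jac}^{(g)}_{C,v}$. The one cosmetic point is that the singleton substitution $X_{(k),(\ell)}\leftarrow y_{L_{(k)}}/y_{0}$ for $k\in[g]$ is exactly the specialization of the general rule $\prod_{K'\subset K}y_{L_{K'}}^{(-1)^{|K|-|K'|}}$ to $|K|=1$ (with $K'=\emptyset$ contributing $y_0^{-1}$), so no separate treatment is actually required there.
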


\begin{proof}
	From the right-hand side and using Lemma~\ref{lem:1}, 
	we have
	\begin{align*}
		& (y_0)^n
		\prod_{\ell\in \R^\ast}
		\left(\frac{y_{(0,\ldots,0,\ell)}}{y_0}\right)^{\wt_\ell(v)}
		{Jac}^{(g)}_{C,v}
		(X_{(k),(\ell)} \leftarrow
		\left\{\dfrac{y_{L_{(k)}}}{y_{0}}\right\},\\
		&\hspace{7cm}
		X_{K,L}\leftarrow 
		\prod_
		{
			K'\subset K
		}
		y_{L_{K'}}^{(-1)^{|K|-|K'|}}
		)\\
		& =
		(y_0)^n
		\sum_{u_1,\ldots,u_g\in C}
		\left(
		\prod_{\scriptsize
			k \in [g]}
		\prod_{\ell \in \R^{\ast}}
		\left\{\dfrac{y_{L_{(k)}}}{y_{0}}\right\}^{n_{\ell}(u_{k})}				
		\right)\\
		& \quad\quad
		\prod_{2 \leq p\leq g+1}
		\prod_{\scriptsize
			\begin{array}{c}
				K\in\binom{[g+1]}{p},\\
				u_{g+1}=v
			\end{array}
		}
		\prod_{L\in {(\R^\ast)^p}}
		\left(
			\prod_{K'\subset K}
			y_{L_{K'}}^{(-1)^{|K|-|K'|}}
		\right)^{n_{L}(u_{K_1},\ldots, u_{K_p})}\\
		& =
		\sum_{u_1,\ldots,u_g\in C}
		y_{0}^{n_{0}(u_{1},\ldots,u_{g},v)}
		\prod_{\substack{a \in \R^{g+1},\\ a \neq 0}}
		\left(
			y_0
			\prod_{
				\scriptsize
				\begin{array}c
					K\subset \Vsupp(a),\\
					K \neq \emptyset,\\
					v = u_{g+1}
				\end{array}}
			\prod_{K'\subset K}
			y_{a_{K^{\prime}}}^{(-1)^{|K|-|K^{\prime}|}}
		\right)^{n_a(u_1,...,u_g,v)}\\
		& =	
		\sum_{u_1,\ldots,u_g\in C}
		y_{0}^{n_{0}(u_{1},\ldots,u_{g},v)}
		\prod_{\substack{a \in \R^{g+1},\\ a \neq 0}}
		y_{a}^{n_{a}(u_{1},\ldots,u_{g},v)}\\
		& =
		\mathfrak{Jac}^{(g)}_{C,v}
		(\{y_{a}\}_{a \in \R^{g+1}}).
	\end{align*}
This completes the proof.
\end{proof}

\subsection{Intersection enumerators and Jacobi polynomials}

In this section, 
we give a relation between intersection enumerators and Jacobi polynomials. 

\begin{thm}\label{Thm:InterJacobi}
Let $C$ be an $\R$-linear code of length $n$. 
{Let 
$s := |\R^{\ast}| $. 
Denote the elements of 
$\R^{\ast}$ by  $\ell_{1},\ldots,\ell_{s}$}.
Then the following hold.
\begin{align*}
I_C^{(g+1)}
=
\sum_{r=0}^{n}
\sum_{\scriptsize
\begin{array}{c}
r_1,\ldots,r_s\in \ZZ_{\geq 0}\\
r_1+\cdots +r_s=r\\
\ell_1,\ldots, \ell_s\in\R^{{\ast}}
\end{array}
}
\left(
	\sum_{\substack{v\in C,\\ \wt_{\ell_i}(v)=r_i}} 
	Jac_{C,v}^{(g)}
\right)
X_{(g+1),\ell_{1}}^{r_1}
\cdots
X_{(g+1),\ell_{s}}^{r_s}.
\end{align*}
\end{thm}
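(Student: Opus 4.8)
The plan is to start from the definition of $I_C^{(g+1)}$ and single out the last summation index $u_{g+1}$, which will play the role of the reference vector. Writing $v := u_{g+1}$, the defining product over $1 \leq p \leq g+1$ and $K \in \binom{[g+1]}{p}$ factors into the contribution of the singleton $K = (g+1)$ and the product over all remaining $K$. Because $n_{j}(u_{g+1}) = n_{j}(v) = \wt_{j}(v)$ for a single vector, the singleton contribution is exactly $\prod_{j \in \R^{\ast}} X_{(g+1),(j)}^{\wt_{j}(v)}$, which is the monomial attached to $v$ in the claimed formula.

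Next I would identify the remaining factors with the inhomogeneous Jacobi polynomial. By the convention $X_{(g+1),(j)} = 1$ built into $Jac^{(g)}_{C,v}$, the singleton term $K = (g+1)$ contributes trivially there, so the product over all $K \neq (g+1)$ appearing in $I_C^{(g+1)}$ coincides term by term with the defining product of $Jac^{(g)}_{C,v}$ (in which every $K$ containing $g+1$ forces $u_{K_{p}} = u_{g+1} = v$). Summing over $u_{1},\ldots,u_{g} \in C$ with $u_{g+1} = v$ held fixed then yields precisely $Jac^{(g)}_{C,v}$, so that
\[
	I_C^{(g+1)}
	=
	\sum_{v \in C}
	\left(
		\prod_{j \in \R^{\ast}}
		X_{(g+1),(j)}^{\wt_{j}(v)}
	\right)
	Jac^{(g)}_{C,v}.
\]

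Finally I would regroup the outer sum over $v$ according to its weight profile. Writing $\R^{\ast} = \{\ell_{1},\ldots,\ell_{s}\}$ and setting $r_{i} := \wt_{\ell_{i}}(v)$, the monomial $\prod_{j} X_{(g+1),(j)}^{\wt_{j}(v)}$ equals $X_{(g+1),\ell_{1}}^{r_{1}} \cdots X_{(g+1),\ell_{s}}^{r_{s}}$, and $r := r_{1} + \cdots + r_{s} = \wt(v)$ ranges over $0,\ldots,n$. Collecting together all $v \in C$ sharing a fixed profile $(r_{1},\ldots,r_{s})$ turns the single sum over $v$ into the stated double sum, which completes the argument.

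The main obstacle is not any computation but the careful bookkeeping of index sets: one must verify that, among all $K \in \binom{[g+1]}{p}$, exactly the singleton $K = (g+1)$ carries the variables $X_{(g+1),(j)}$ that are \emph{not} suppressed by the Jacobi convention, and that every other factor (whether or not $g+1 \in K$) matches the corresponding factor of $Jac^{(g)}_{C,v}$ once $u_{g+1}$ is frozen to $v$. Establishing this factor-by-factor correspondence is the crux; the subsequent regrouping by weight profile is purely organizational and uses nothing beyond $\sum_{i} \wt_{\ell_{i}}(v) = \wt(v) \leq n$.
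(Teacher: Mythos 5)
Your proposal is correct and follows essentially the same route as the paper: peel off the singleton factor $K=(g+1)$ from the defining product of $I_C^{(g+1)}$, identify the remaining product (with $u_{g+1}=v$ frozen) as $Jac^{(g)}_{C,v}$ via the convention $X_{(g+1),(j)}=1$, and then regroup the outer sum over $v\in C$ by the weight profile $(r_1,\ldots,r_s)$. The factor-by-factor bookkeeping you flag as the crux is exactly what the paper's displayed chain of equalities carries out.
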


\begin{proof}
	Let $C$ be an $\R$-linear code of length~$n$. 
	Then the $(g+1)$-th intersection enumerator for a code~$C$ 
	can be written as:
	\begin{align*}
		I_C^{(g+1)}
		&(\{X_{K,L}\}
			_{1 \leq p\leq g+1,K\in\binom{[g+1]}{p},L\in (\R^\ast)^p})\\
		&=
		\sum_{u_1,\ldots,u_{g+1}\in C}
		\prod_{1 \leq p\leq g+1}
		\prod_{\scriptsize
			\begin{array}{c}
				K\in\binom{[g+1]}{p}
			\end{array}
		}
		\prod_{L\in {(\R^\ast)^p}}
		X_{K,L}^{n_{L}(u_{K_1},\ldots, u_{K_p})}\\
		& =
		\sum_{u_{g+1}\in C}
		\left\{
			\sum_{u_1,\ldots,u_{g}\in C}
			\left(
				\prod_{\scriptsize
					k \in [g]}
				\prod_{\ell \in \R^{\ast}}
				X_{(k),(\ell)}^{n_{\ell}(u_{k})}				
			\right)\right.\\
			&\hspace{1cm}
			\left.
			\prod_{2 \leq p\leq g+1}
			\prod_{\scriptsize
				\begin{array}{c}
					K\in\binom{[g+1]}{p}
				\end{array}
			}
			\prod_{L\in {(\R^\ast)^p}}
			X_{K,L}^{n_{L}(u_{K_1},\ldots, u_{K_p})}
		\right\}
		\prod_{L \in \R^{\ast}}
		X_{(g+1),L}^{n_{L}(u_{g+1})}\\
		& =
		\sum_{u_{g+1}\in C}
		Jac^{(g)}_{C,u_{g+1}} 
		(\{X_{(k),(\ell)}\}_{k \in [g], \ell \in \R^\ast},\\
		&\hspace{3cm}
		\{X_{K,L}\}
		_{2 \leq p\leq g+1,K\in\binom{[g+1]}{p},L\in (\R^\ast)^p})	
		\prod_{L \in \R^{\ast}}
		X_{(g+1),L}^{n_{L}(u_{g+1})}\\
		& =
		\sum_{r = 0}^{n}
		\sum_{
			\substack{r_{1},\ldots,r_{s} \in \ZZ_{\geq 0}\\
						r_{1}+\cdots+r_{s} = r\\
						\ell_{1},\ldots,\ell_{s} \in \R^{\ast}}}
			\sum_{\substack{v \in C,\\ \wt_{\ell_i}(v) = r_{i}}}
			Jac^{(g)}_{C,v}
			(\{X_{(k),(\ell)}\}_{k \in [g], \ell \in \R^{\ast}},\\
			& \hspace{2cm}
			\{X_{K,L}\}
			_{2 \leq p\leq g+1,K\in\binom{[g+1]}{p},L\in (\R^\ast)^p})
		X_{(g+1),\ell_{1}}^{r_{1}} \cdots X_{(g+1),\ell_{s}}^{r_{s}}.
	\end{align*}
	Hence the proof is completed. 
\end{proof}

\section{MacWilliams type identities}\label{sec:mac}
In this section, 
we give two MacWilliams type identities for 
$g$-th homogeneous and $g$-th inhomogeneous Jacobi polynomials. 
We recall~\cite{DHO,MMS1972}
to introduce some fixed characters over~$\R$.
A \emph{character} $\chi$ of $\R$ 
is a homomorphism from the additive group $\R$ 
to the multiplicative group of non-zero complex numbers.

Let $\R = \FF_{q}$, 
where $q =  p^{f}$ for some prime number $p$.  
Now let $F(x)$ be a primitive irreducible polynomial 
of degree $f$ over $\FF_{p}$ 
and let $\lambda$ be a root of $F(x)$.
Then any element $\alpha \in \FF_{q}$ 
has a unique representation as:
\begin{equation}\label{EquAlpha}
	\alpha 
	=
	\alpha_{0} + \alpha_{1} \lambda	
	+ \alpha_{2} \lambda^{2}
	+ \cdots +
	\alpha_{f-1} \lambda^{f-1},
\end{equation} 
where $\alpha_{i} \in \FF_{p}$.
We define $\chi(\alpha) := \zeta_{p}^{\alpha_{0}}$, 
where $\zeta_{p}$ is the primitive complex $p$-th root of unity 
$\zeta_{p} = e^{2{\pi}i/p}$,
and $\alpha_{0}$ is given by Equation~(\ref{EquAlpha}).

Again if $\R = \ZZ_{k}$,
then for
$\alpha\in\ZZ_{k}$,
we define 
$\chi$ as $\chi(\alpha) := \zeta_{k}^{\alpha}$,
where
$\zeta_{k}$ is the primitive complex $k$-th root of unity
$\zeta_{k} = e^{2{\pi}i/k}$.

For any $\alpha \in \R$, 
we have the following property:
\[
	\sum_{\beta \in \R}
	\chi(\alpha\beta) 
	:= 
	\begin{cases}
		|\R| & \mbox{if} \quad \alpha = 0, \\
		0 & \mbox{if} \quad \alpha \neq 0.
	\end{cases} 
\]


\subsection{Homogeneous Jacobi polynomials}

The MacWilliams type identity for the $g$-fold
complete joint Jacobi polynomials were discussed 
in~\cite[Theorem 5.1]{CM2021}. Then it is easy to 
give the MacWilliams type identity for 
the $g$-th homogeneous Jacobi polynomials as follows.

\begin{thm}\label{Thm:MacHomo}
Let 
$C$ be an $\R$-linear code of length $n$.  
Then we have
\begin{align*}
&\mathfrak{Jac}^{(g)}_{C^{\perp}, v}(\{y_a\}_{a\in \R^{g+1}})
\\
&=
\frac{1}{|C|^g}
\mathfrak{Jac}^{(g)}_{C, v}
\left(\left\{
\sum_{
\scriptsize
\begin{array}{c}
b_1,\ldots,b_g\in \R\\
b_{g+1}=a_{g+1}
\end{array}
}
\chi
\left(
	\sum_{i = 1}^{g} a_{i}b_{i}
\right)
y_{(b_1,\ldots,b_g,b_{g+1})}
\right\}
_{a\in \R^{g+1}}\right)_{\,.}
\end{align*}
\end{thm}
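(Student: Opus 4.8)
The plan is to prove this by iterated Poisson summation (the orthogonality relation for characters), treating the homogeneous Jacobi polynomial coordinatewise. First I would rewrite the polynomial in product form: since $n_a(u_1,\ldots,u_g,v)$ counts the coordinates $i$ at which the column $(u_{1,i},\ldots,u_{g,i},v_i)$ equals $a$, grouping the factors by coordinate gives
\[
\mathfrak{Jac}^{(g)}_{C,v}(\{y_a\}) = \sum_{u_1,\ldots,u_g\in C}\prod_{i=1}^n y_{(u_{1,i},\ldots,u_{g,i},v_i)},
\]
and the same identity holds with $C$ replaced by $C^{\perp}$. The basic tool is the orthogonality relation, a direct consequence of the character property recalled just above the theorem: for any function $f\colon \R^n \to \CC$,
\[
\sum_{u\in C^{\perp}} f(u) = \frac{1}{|C|}\sum_{w\in C}\sum_{u\in\R^n}\chi(w\cdot u)\,f(u),
\]
which follows from $\frac{1}{|C|}\sum_{w\in C}\chi(w\cdot u) = 1$ when $u\in C^{\perp}$ and $=0$ otherwise.

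Next I would apply this identity $g$ times, once for each code variable $u_1,\ldots,u_g$. At the $j$-th step I regard the whole summand as a function of $u_j$ alone, with the remaining variables as parameters, replace the sum over $u_j\in C^{\perp}$ by $\frac{1}{|C|}\sum_{w_j\in C}\sum_{u_j\in\R^n}\chi(w_j\cdot u_j)(\cdots)$, and use that the character factors over coordinates, $\chi(w_j\cdot u_j)=\prod_{i=1}^n\chi(w_{j,i}u_{j,i})$. Because the coordinate factors of $y$ are independent, the sum over $u_j\in\R^n$ splits into a product of coordinate sums. After all $g$ steps the free coordinate entries can be renamed $b_j\in\R$, yielding
\[
\mathfrak{Jac}^{(g)}_{C^{\perp},v} = \frac{1}{|C|^g}\sum_{w_1,\ldots,w_g\in C}\prod_{i=1}^n\left(\sum_{b_1,\ldots,b_g\in\R}\chi\Bigl(\sum_{j=1}^g w_{j,i}b_j\Bigr)y_{(b_1,\ldots,b_g,v_i)}\right).
\]

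Finally I would read off the claimed substitution. Writing $a=(w_{1,i},\ldots,w_{g,i},v_i)$, the inner parenthesis is exactly the substituted variable
\[
\widetilde y_a = \sum_{\substack{b_1,\ldots,b_g\in\R\\ b_{g+1}=a_{g+1}}}\chi\Bigl(\sum_{j=1}^g a_j b_j\Bigr)y_{(b_1,\ldots,b_g,b_{g+1})},
\]
since $a_j=w_{j,i}$ for $j\le g$ and the constraint $b_{g+1}=a_{g+1}=v_i$ pins the last coordinate of $y$ to $v_i$. Hence $\sum_{w_1,\ldots,w_g\in C}\prod_i \widetilde y_{(w_{1,i},\ldots,w_{g,i},v_i)}$ is precisely $\mathfrak{Jac}^{(g)}_{C,v}(\{\widetilde y_a\})$, which is the right-hand side. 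The only delicate point is the bookkeeping in the iteration, namely keeping track of which variables still range over $C^{\perp}$ and which have become free in $\R^n$; but since the summand factors cleanly over coordinates, each Poisson step is independent of the others, so no genuine obstruction arises. Alternatively, as the preceding remark indicates, one can deduce the identity by specializing the MacWilliams identity for $g$-fold complete joint Jacobi polynomials in \cite[Theorem 5.1]{CM2021}, after identifying the homogeneous Jacobi polynomial as the appropriate restriction of the complete joint version.
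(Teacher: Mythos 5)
Your proof is correct and follows essentially the same route as the paper, whose own ``proof'' is merely a citation of the analogous binary result \cite[Theorem 12]{HOO}: the standard argument there is exactly your coordinatewise factorization $\prod_{i=1}^n y_{(u_{1,i},\ldots,u_{g,i},v_i)}$ combined with $g$ applications of the character-orthogonality identity $\delta_{C^{\perp}}(u)=\frac{1}{|C|}\sum_{w\in C}\chi(w\cdot u)$ and the splitting of the free sums over $\R^n$ into per-coordinate sums. Your closing remark that one could instead specialize \cite[Theorem 5.1]{CM2021} matches the paper's own comment immediately preceding the theorem.
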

\begin{proof}
The proof is similar to the proof of \cite[Theorem 12]{HOO}.
\end{proof}

\subsection{Inhomogeneous Jacobi polynomials}
In this section, 
we give a MacWilliams type identity for 
inhomogeneous Jacobi polynomials. 

\begin{thm}\label{Thm:MacInhomo}
Let $C$ be an $\R$-linear code of length $n$. 
Then we have the following MacWilliams type relation:
\begin{align*}
&Jac^{(g)}_{C^\perp,v}
(\{X_{(k),(\ell)}\}_{k \in [g], \ell \in \R^{\ast}},
\{X_{K,L}\}
_{2 \leq p\leq g+1,K\in\binom{[g+1]}{p},L\in (\R^\ast)^p})\\
&=
(y_0)^n
\prod_{\ell\in \R^\ast}
\left(\frac{y_{(0,\ldots,0,\ell)}}{y_0}\right)^{\wt_\ell(v)}\\
&\quad\quad
{Jac}^{(g)}_{C,v}
(X_{(k),(\ell)} \leftarrow
\left\{\dfrac{y_{L_{(k)}}}{y_{0}}\right\}, 
X_{K,L}\leftarrow 
\prod_
{
K'\subset K
}
y_{L_{K'}}^{(-1)^{|K|-|K'|}}
), 
\end{align*}
where 
$L_{(k)}$ denotes a vector with length~$g+1$ such that 
$L_{(k),i} =\ell$ if $i = k$, $L_{(k),i} = 0$ if $i \neq k$, and
$K = (K_{1},\ldots,K_{p}) \in \binom{[g+1]}{p}$ 
for $2 \leq p \leq g+1$, and
$K' = (K_{m_{1}},\ldots,K_{m_{r}})$ for
$1 \leq r \leq p$ and
$1 \leq m_{1} \leq \cdots \leq m_{r} \leq p$,
and
$L_{K'}$ is the length {$g+1$} vector such that 
${L_{K',i}}=L_{j}$ for $i = K_{m_{j}} \in K'$, where $1\leq j \leq r$, 
${L_{K',i}}=0$ for $i \notin K'$,
and $L_{\emptyset}=0$,
and for $a\in \R^{g+1}$, 
\begin{align*}
&y_a=
\sum_{\scriptsize
\begin{array}{c}
b_1,\ldots,b_g\in \R\\
b_{g+1}=a_{g+1}
\end{array}
}
{\chi(\sum_{i=1}^g a_ib_i)} 
\prod_{1 \leq p\leq g+1}
\prod_{\scriptsize
\begin{array}{c}
K\in\binom{[g+1]}{p}
\end{array}
}
X_{K,B_K}, 
\end{align*}
where
$B_K$ is the length $|K|$ vector indexed by $K$ such that 
$B_K=(b_{i_1},\ldots,b_{i_{|K|}})$ with $i_1<\cdots<i_{|K|}$,
and 
$X_{\emptyset,B_{\emptyset}}=1$, and 
$X_{(g+1),(\ell)}=1$ for all $\ell\in \R$. 

\end{thm}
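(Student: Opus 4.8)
The plan is to combine the MacWilliams-type identity for the homogeneous Jacobi polynomial (Theorem~\ref{Thm:MacHomo}) with the relation between homogeneous and inhomogeneous Jacobi polynomials (Theorem~\ref{Thm:HomoInhomo}), rather than to attempt a direct character-sum computation on the inhomogeneous polynomial itself. The key observation is that the right-hand side of the claimed identity is, up to the substitution $y_a \leftarrow (\text{character sum})$, exactly the right-hand side of Theorem~\ref{Thm:HomoInhomo}; so the statement is really asserting that the inhomogeneous polynomial of $C^{\perp}$ equals the homogeneous polynomial of $C^{\perp}$ after expressing the latter's variables in terms of the $X_{K,L}$'s.

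First I would write $\mathfrak{Jac}^{(g)}_{C^{\perp},v}$ on the left using Theorem~\ref{Thm:HomoInhomo} applied to the dual code $C^{\perp}$, which expresses it as $(y_0)^n \prod_{\ell \in \R^{\ast}} (y_{(0,\ldots,0,\ell)}/y_0)^{\wt_\ell(v)}$ times $Jac^{(g)}_{C^{\perp},v}$ with the variables $X_{(k),(\ell)}$ and $X_{K,L}$ substituted as indicated. Next I would invoke Theorem~\ref{Thm:MacHomo} to rewrite $\mathfrak{Jac}^{(g)}_{C^{\perp},v}$ as $\frac{1}{|C|^g}\mathfrak{Jac}^{(g)}_{C,v}$ evaluated at the variables $y_a \leftarrow \sum_{b_1,\ldots,b_g \in \R,\, b_{g+1}=a_{g+1}} \chi\!\left(\sum_{i=1}^g a_i b_i\right) y_{(b_1,\ldots,b_g,b_{g+1})}$. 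Finally I would apply Theorem~\ref{Thm:HomoInhomo} once more, this time to the code $C$, to convert $\mathfrak{Jac}^{(g)}_{C,v}$ back into $Jac^{(g)}_{C,v}$ with the same $y \mapsto X_{K,L}$ substitution. Matching the two expressions and tracking how the character sum propagates through the substitution should produce precisely the formula for $y_a$ in terms of the $X_{K,B_K}$ stated at the end of the theorem.

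The main obstacle I expect is bookkeeping: verifying that composing the substitution $y_a \mapsto \prod_{K'\subset K} y_{L_{K'}}^{(-1)^{|K|-|K'|}}$ (from Theorem~\ref{Thm:HomoInhomo}) with the character-sum substitution on the $y$-variables yields exactly $y_a = \sum_{b} \chi(\sum_i a_i b_i) \prod_{1\le p\le g+1}\prod_{K} X_{K,B_K}$, including the normalizing factors $(y_0)^n$, the weight-dependent prefactors $\prod_\ell (y_{(0,\ldots,0,\ell)}/y_0)^{\wt_\ell(v)}$, and the conventions $X_{\emptyset,B_\emptyset}=1$ and $X_{(g+1),(\ell)}=1$. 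These conventions are precisely what is needed for the condition $b_{g+1}=a_{g+1}$ (the reference coordinate is fixed, not dualized) to be respected; the character $\chi$ acts only on the first $g$ coordinates, which matches the fact that the $(g+1)$-st slot carries the reference vector $v$ and is left unchanged by the MacWilliams transform. I would therefore devote the bulk of the argument to Lemma~\ref{lem:1}~(2), using it to invert the $x_a$-type relation and to confirm that the inner character sum, once expanded via Lemma~\ref{lem:1}~(1), repackages cleanly into the product $\prod_{K} X_{K,B_K}$ over all nonempty $K \in \binom{[g+1]}{p}$. The normalization $1/|C|^g$ on the homogeneous side must cancel against the factor produced when re-homogenizing, and I would check this cancellation explicitly as the last step.
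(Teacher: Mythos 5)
Your proposal is correct in substance but takes a genuinely different, more modular route than the paper. The paper proves Theorem~\ref{Thm:MacInhomo} by a single direct computation: it introduces the indicator $\delta_{C^{\perp}}(w)=\frac{1}{|C|}\sum_{u\in C}\chi(u\cdot w)$, expands $Jac^{(g)}_{C^{\perp},v}$ over all $w_1,\ldots,w_g\in\R^n$, factors the character sum coordinatewise to produce the $y_a$'s, and only at the very end invokes Lemma~\ref{lem:1} to repackage $\prod_a y_a^{n_a}$ into the inhomogeneous form --- in effect re-deriving Theorem~\ref{Thm:MacHomo} and Theorem~\ref{Thm:HomoInhomo} inline rather than citing them. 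Your plan instead composes three black boxes: pass from $Jac^{(g)}_{C^{\perp},v}$ to $\mathfrak{Jac}^{(g)}_{C^{\perp},v}$, apply Theorem~\ref{Thm:MacHomo}, then apply Theorem~\ref{Thm:HomoInhomo} to $C$; and indeed the composite substitution $y_a\leftarrow\sum_{b}\chi(\sum_i a_ib_i)\,\prod_K X_{K,B_K}$ is exactly the $y_a$ of the statement, so the scheme works. What your approach buys is transparency (the theorem is revealed as a formal consequence of the other two plus a change of variables); what it costs is that the first step needs the \emph{inverse} of Theorem~\ref{Thm:HomoInhomo}, namely $Jac^{(g)}_{D,v}=\mathfrak{Jac}^{(g)}_{D,v}\bigl(y_0\leftarrow 1,\ y_a\leftarrow\prod_{K\subset\Vsupp(a),K\neq\emptyset}X_{K,a_K}\bigr)$ with the convention $X_{(g+1),(\ell)}=1$, which is the Jacobi analogue of Theorem~\ref{Thm:WeightInter}~(1); it is not stated in the paper, so you would have to prove it (it follows from Lemma~\ref{lem:1}~(1) verbatim) and verify via Lemma~\ref{lem:1}~(2) that it is mutually inverse to the substitution in Theorem~\ref{Thm:HomoInhomo}, as you anticipate. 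One warning: do not expect the factor $\frac{1}{|C|^{g}}$ from Theorem~\ref{Thm:MacHomo} to cancel against the re-homogenization --- it does not. Setting all $X_{K,L}=1$ gives $|C^{\perp}|^{g}$ on the left and $|\R|^{gn}=|C|^{g}|C^{\perp}|^{g}$ on the right, and the final line of the paper's own proof still carries $\frac{1}{|C|^{g}}$; the printed statement of Theorem~\ref{Thm:MacInhomo} appears simply to have dropped it, so your derivation should retain that factor rather than argue it away.
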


\begin{proof}
	For any $w \in \R^{n}$, we define
	\[
		\delta_{C^{\perp}}(w)
		:=
		\begin{cases}
			1 & \mbox{if } w \in C^{\perp},\\
			0 & \mbox{otherwise}.
		\end{cases}
	\]
	Then we have the following identity
	\[
		\delta_{C^{\perp}}(w)
		=
		\dfrac{1}{|C|}
		\sum_{u \in C}
		\chi(u \cdot w).
	\]
	Now
	\begin{align*}
		Jac^{(g)}_{C^{\perp},v}&
		(\{X_{(k),(\ell)}\}_{k\in [g], \ell\in\R^{\ast}}, 
		\{X_{K,L}\}_{
			2 \leq p \leq g+1, 
			K \in \binom{[g+1]}{p}, 
			L \in (\R^{\ast})^{p}})\\
		& =
		\sum_{u_1,\ldots,u_g\in C^{\perp}}
		\left(
		\prod_{\scriptsize
			k \in [g]}
		\prod_{\ell \in \R^{\ast}}
		X_{(k),(\ell)}^{n_{\ell}(u_{k})}				
		\right)\\
		&\hspace{2cm}
		\left(
		\prod_{2\leq p\leq g+1}
		\prod_{\scriptsize
			\begin{array}{c}
				K\in\binom{[g+1]}{p}\\
				u_{g+1}=v
		\end{array}}
		\prod_{L\in {(\R^\ast)^p}}
		X_{K,L}^{n_{L}(u_{K_1},\ldots, u_{K_p})}
		\right)\\
		& =
		\sum_{u_1,\ldots,u_g\in C^{\perp}}
		\prod_{1 \leq p \leq g+1}
		\prod_{\scriptsize
			\begin{array}{c}
				K\in\binom{[g+1]}{p}\\
				K \neq (g+1)\\
				u_{g+1}=v
			\end{array}
		}
		\prod_{L\in {(\R^\ast)^p}}
		X_{K,L}^{n_{L}(u_{K_1},\ldots, u_{K_p})}\\
		& =
		\sum_{w_1,\ldots,w_g\in \R^{n}}
		\prod_{i = 1}^{g}
		\delta_{C^{\perp}}(w_{i})
		\prod_{1 \leq p\leq g+1}
		\prod_{\scriptsize
			\begin{array}{c}
				K\in\binom{[g+1]}{p}\\
				K \neq (g+1)\\
				w_{g+1}=v
			\end{array}
		}
		\prod_{L\in {(\R^\ast)^p}}
		X_{K,L}^{n_{L}(w_{K_1},\ldots, w_{K_p})}\\
		& =
		\sum_{w_1,\ldots,w_g\in \R^{n}}
		\prod_{i = 1}^{g}
		\left(
			\dfrac{1}{|C|}
			\sum_{u_{i} \in C}
			\chi(u_{i} \cdot w_{i})
		\right)\\
		& \hspace{60pt}
		\prod_{1 \leq p\leq g+1}
		\prod_{\scriptsize
			\begin{array}{c}
				K\in\binom{[g+1]}{p}\\
				K \neq (g+1)\\
				w_{g+1}=v
			\end{array}
		}
		\prod_{L\in {(\R^\ast)^p}}
		X_{K,L}^{n_{L}(w_{K_1},\ldots, w_{K_p})}\displaybreak\\
		& =
		\dfrac{1}{|C|^{g}}
		\sum_{
			\substack{
				u_{1},\ldots,u_{g} \in C\\
				w_1,\ldots,w_g\in \R^{n}\\
				w_{g+1} = v}}
		\chi(\sum_{i = 1}^{g}u_{i,1}w_{i,1}+\cdots+u_{i,n}w_{i,n})\\
		& \hspace{60pt}
		\prod_{1 \leq p\leq g+1}
		\prod_{\scriptsize
			\begin{array}{c}
				K\in\binom{[g+1]}{p}\\
				K \neq (g+1)
			\end{array}}
		\prod_{L\in {(\R^\ast)^p}}
		X_{K,L}^{n_{L}(w_{K_1},\ldots, w_{K_p})}\\
		& =
		\dfrac{1}{|C|^{g}}
		\sum_{
			\substack{
				u_{1},\ldots,u_{g} \in C}}
		\prod_{1 \leq i \leq n}
		\left\{
			\sum_{w_{1,i},\ldots,w_{g,i}\in\R}
			\chi(u_{1,i}w_{1,i}+\cdots+u_{g,i}w_{g,i})
		\right.\\
		& 
		\left.
		\left(
			\prod_{1 \leq p\leq g+1}
			\prod_{\scriptsize
				\begin{array}{c}
					K\in\binom{[g+1]}{p}\\
					K \neq (g+1)\\
					w_{g+1}=v
			\end{array}}
			X_{K,(w_{K_1,i},\ldots,w_{K_p,i})}
		\right)
		\right\}\\
		& =
		\dfrac{1}{|C|^{g}}
		\sum_{
			\substack{
				u_{1},\ldots,u_{g} \in C}}
		\prod_{a \in \R^{g+1}}
		\left\{
		\sum_{
			\substack{b_{1},\ldots,b_{g}\in\R\\
				b_{g+1} = a_{g+1}}}
		\chi(a_{1}b_{1} + \cdots + a_{g}b_{g})
		\right.\\
		& \hspace{100pt}
		\left.
		\left(
		\prod_{1 \leq p\leq g+1}
		\prod_{\scriptsize
			\begin{array}{c}
				K\in\binom{[g+1]}{p}\\
				K \neq (g+1)
		\end{array}}
		X_{K,B_{K}}
		\right)
		\right\}^{n_{a}(u_{1},\ldots,u_{g},v)}\\
		& =
		\dfrac{1}{|C|^{g}}
		\sum_{
			\substack{
				u_{1},\ldots,u_{g} \in C}}
		\prod_{a \in \R^{g+1}}
		y_{a}^{{n_{a}(u_{1},\ldots,u_{g},v)}}\\
		& =
		\dfrac{1}{|C|^{g}}
		\sum_{
			\substack{
				u_{1},\ldots,u_{g} \in C}}
		y_{0}^{n}\\
		&\hspace{1cm}
		\prod_{1 \leq p \leq g+1}
		\prod_{
			\scriptsize
			\begin{array}{c}
				K \in \binom{[g+1]}{p}\\ 
				v = u_{g+1}
			\end{array}}
		\prod_{L \in (\R^{\ast})^{p}}
		\left\{
			\prod_{K'\subset K}
			y_{L_{K'}}^{(-1)^{|K|-|K'|}}
		\right\}^{n_{L}(u_{K_{1}},\ldots,u_{K_{p}})}\\
		&\hspace{8cm}
		(\mbox{by Lemma}~\ref{lem:1})\displaybreak\\
		& =
		\dfrac{1}{|C|^{g}}
		y_{0}^{n}
		\prod_{\ell \in \R^{\ast}}
		\left(
			\dfrac{y_{(0,\ldots,0,\ell)}}{y_{0}}
		\right)^{\wt_\ell(v)}\\
		& \hspace{15pt}
		\sum_{
			\substack{
				u_{1},\ldots,u_{g} \in C}}
		\prod_{1 \leq p \leq g+1}
		\prod_{
			\scriptsize
			\begin{array}{c}
				K \in \binom{[g+1]}{p}\\ 
				K \neq (g+1)\\
				v = u_{g+1}
		\end{array}}
		\prod_{L \in (\R^{\ast})^{p}}
		\left\{
		\prod_{K'\subset K}
		y_{L_{K'}}^{(-1)^{|K|-|K'|}}
		\right\}^{n_{L}(u_{K_{1}},\ldots,u_{K_{p}})}.
	\end{align*}
	This completes the proof.
\end{proof}

We will discuss a generalization of the Brou\'e--Enguehard map~\cite{BE} 
and Bannai-Ozeki map~\cite{BO} to the case Jacobi polynomials of genus~$g$
in our subsequent papers.
The notion of Jacobi polynomials with multiple reference
vectors would be discussed in~\cite{CMTOxxxx}.
Bonnecaze et al.~\cite{BMS1999} gave a connection between design theory and Jacobi polynomials. Moreover,
a generalization of the connection 
to the case genus~$g$ Jacobi polynomials with multiple reference vectors would be given in~\cite{CHMOxxxx}.
This gives rise to a natural question:
is it possible to generalize the results of this paper to the case genus~$g$ Jacobi polynomials with multiple reference vectors $g$?
In our future work, we shall investigate this question.

\section*{Acknowledgements}
This work was supported by JSPS KAKENHI (18K03217).
The authors would also like to thank the anonymous
referees for their beneficial comments on an earlier version of the manuscript.

\end{document}